\newcommand{\defeq}{\mathrel{\mathop:}=}
\newcommand{\vect}[1]{\ensuremath{\mathbf{#1}}}
\newcommand{\mat}[1]{\ensuremath{\mathbf{#1}}}
\newcommand{\norm}[1]{\|{#1} \|}
\newcommand{\tr}{\mathrm{tr}}
\newcommand{\trans}{^{\top}}
\newcommand{\N}{\mathbb{N}}
\newcommand{\R}{\mathbb{R}}
\renewcommand{\S}{\mathbb{S}}
\newcommand{\E}{\mathbb{E}}
\renewcommand{\Pr}{\mathbb{P}}
\newcommand{\F}{\mathcal{F}}
\newcommand{\A}{\mat{A}}
\newcommand{\B}{\mat{B}}
\newcommand{\I}{\mat{I}}
\newcommand{\X}{\mat{X}}
\newcommand{\Y}{\mat{Y}}
\newcommand{\e}{\vect{e}}
\renewcommand{\v}{\vect{v}}
\newtheorem{theorem}{Theorem}
\newtheorem{lemma}[theorem]{Lemma}
\newtheorem{corollary}[theorem]{Corollary}
\theoremstyle{definition}
\newtheorem{definition}{Definition}
\newtheorem{condition}[definition]{Condition}
\begin{document}

\title{A Short Note on Concentration Inequalities for Random Vectors with SubGaussian Norm}

\author{Chi Jin \\ University of California, Berkeley \\ \texttt{chijin@cs.berkeley.edu}
	\and 
	Praneeth Netrapalli \\ Microsoft Research, India \\ \texttt{praneeth@microsoft.com}
	\and
	Rong Ge \\ Duke University \\
	\texttt{rongge@cs.duke.edu}
	\and
	Sham M. Kakade \\ University of Washington, Seattle \\
	\texttt{sham@cs.washington.edu}
	\and
	Michael I. Jordan \\ University of California, Berkeley \\ \texttt{jordan@cs.berkeley.edu}}

\maketitle

\newcommand{\cnote}{\textcolor[rgb]{1,0,0}{C: }\textcolor[rgb]{1,0,1}}

\newcommand{\la}{\langle}
\newcommand{\ra}{\rangle}
\newcommand{\In}{\mathbb{I}}

\newcommand{\nSG}{\text{nSG}}
\newcommand{\subG}{\text{sub-Gaussian}}
\newcommand{\subE}{\text{sub-Exp}}

\begin{abstract}
In this note, we derive concentration inequalities for random vectors with subGaussian norm (a generalization of both subGaussian random vectors and norm bounded random vectors), which are tight up to logarithmic factors.
\end{abstract}

\section{Introduction}
Concentration (large deviation) inequalities are one of the most important subjects of study in probability theory. A class of distributions for which sharp concentration inequalities have been developed is the class of subGaussian distributions.
\begin{definition}
A random variable $X \in R$ is subGaussian, if there exists $\sigma \in R$ so that:
\begin{equation*}
\E e^{\theta (X - \E X)} \le e^{\frac{\theta^2\sigma^2}{2}}, \quad \forall \theta \in \R.
\end{equation*}
\end{definition}
\begin{definition}
A random vector $\X \in R^d$ is subGaussian, if there exists $\sigma \in R$ so that:
\begin{equation*}
\E e^{\la \v,  \X - \E \X \ra}  \le e^{\frac{\norm{\v}^2\sigma^2}{2}}, \quad \forall \v \in \R^{d}.
\end{equation*}
\end{definition}
The concentration bounds of subGaussian random vectors/variables depends on the parameter $\sigma$ -- smaller the $\sigma$ better the concentration bounds. While subGaussian distributions arise naturally in several applications, there are settings where the random vectors have nice concentration properties but the subGaussian parameter $\sigma$ is very large (so that applying concentration bounds for general subGaussian random vectors gives loose bounds). In this short note, we consider a related but different class of distributions, called \emph{norm-subGaussian} random vectors and establish tighter concentration bounds for them.

\textbf{Organization}: In Section~\ref{sec:nSG}, we introduce norm subGaussian random vectors and some of their properties and we prove our main results in Section~\ref{sec:results}. We conclude in Section~\ref{sec:conc}.
\section{Norm SubGaussian Random Vector}\label{sec:nSG}
The norm subGaussian random vector is defined as follows.
\begin{definition}\label{def:sGnorm}
A random vector $\X \in \R^d$ is \emph{norm-subGaussian} (or $\nSG(\sigma)$), if $\exists \; \sigma$ so that:
\begin{equation*}
\Pr\left(\norm{\X - \E \X} \ge t \right) \le 2 e^{-\frac{t^2}{2\sigma^2}}, \qquad \forall t \in \R.
\end{equation*}
\end{definition}
Norm subGaussian includes both subGaussian (with a smaller $\sigma$ parameter) and bounded norm random vectors as special cases.
\begin{lemma}\label{lem:examplesGnorm}
There exists absolute constant $c$ so that following random vectors are all $\nSG(c \cdot \sigma)$.
\begin{enumerate}
\item A bounded random vector $\X \in \R^d$ so that $\norm{\X} \le \sigma$.
\item A random vector $\X \in \R^d$, where $\X = \xi \e_1$ and random variable $\xi \in \R$ is $\sigma$-subGaussian.
\item A random vector $\X \in \R^d$ that is $(\sigma/\sqrt{d})$-subGaussian.
\end{enumerate}
\end{lemma}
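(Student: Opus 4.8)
The plan is to handle the three cases separately; the first two are immediate, while the third is the substantive one and calls for an $\epsilon$-net argument over the sphere. For item~1, since $\norm{\X} \le \sigma$ almost surely, Jensen's inequality gives $\norm{\E\X} \le \E\norm{\X} \le \sigma$, so $\norm{\X - \E\X} \le 2\sigma$ with probability one; the tail probability therefore vanishes for $t > 2\sigma$ and is trivially at most $1$ otherwise. It then suffices to pick $c$ large enough that $2e^{-t^2/(2c^2\sigma^2)} \ge 1$ whenever $t \le 2\sigma$ (any $c \ge \sqrt{2/\ln 2}$ works), which makes $\X \in \nSG(c\sigma)$. For item~2, write $\X - \E\X = (\xi - \E\xi)\e_1$, so that $\norm{\X - \E\X} = \abs{\xi - \E\xi}$; a scalar $\sigma$-subGaussian variable obeys the two-sided tail bound $\Pr(\abs{\xi - \E\xi} \ge t) \le 2e^{-t^2/(2\sigma^2)}$, obtained from the Chernoff bound applied to each tail after optimizing $\theta = \pm t/\sigma^2$ in the moment generating function inequality. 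Hence $\X \in \nSG(\sigma)$ directly, with $c = 1$.

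Item~3 is the main work. Assume without loss of generality that $\E\X = \zero$. The key observation is that for any fixed unit vector $\v$, the scalar $\la \v, \X \ra$ is $(\sigma/\sqrt d)$-subGaussian by the defining inequality of a subGaussian vector, so Chernoff gives $\Pr(\la \v, \X \ra \ge s) \le e^{-s^2 d/(2\sigma^2)}$. To convert these one-dimensional bounds into a bound on $\norm{\X} = \sup_{\norm{\v}=1}\la \v, \X \ra$, I would fix a $1/2$-net $\cN$ of the unit sphere $S^{d-1}$, whose cardinality satisfies the standard estimate $\abs{\cN} \le 5^d$, and invoke the approximation property $\norm{\X} \le 2\max_{\v \in \cN}\la \v, \X \ra$. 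A union bound over $\cN$ then yields
\begin{equation*}
\Pr(\norm{\X} \ge t) \le \Pr\left(\max_{\v \in \cN}\la \v, \X \ra \ge t/2\right) \le 5^d e^{-t^2 d/(8\sigma^2)} = \left(5\, e^{-t^2/(8\sigma^2)}\right)^d.
\end{equation*}

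The main obstacle is controlling the $5^d$ union-bound factor so that the final norm-subGaussian parameter does not depend on $d$. The point to exploit is that the per-direction exponent also carries a factor of $d$: once $t$ exceeds an absolute multiple of $\sigma$, so that $5\, e^{-t^2/(8\sigma^2)} < 1$, raising this base to the power $d \ge 1$ only strengthens the bound, giving decay like $e^{-\Omega(t^2/\sigma^2)}$ uniformly in $d$. For the complementary small-$t$ regime one falls back on the trivial bound of $1$, exactly as in item~1. Choosing an absolute constant $c$ large enough to cover both regimes simultaneously then establishes $\X \in \nSG(c\sigma)$ and completes the proof. The delicate part is merely bookkeeping the constants so that the cross-over threshold in $t$ and the small-$t$ slack are compatible with a single $c$; the cancellation of the dimension between the covering number and the exponent is the conceptual crux.
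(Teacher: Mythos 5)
Your proposal is correct and follows essentially the same route as the paper: items 1 and 2 by direct scalar arguments, and item 3 by a $1/2$-net over $\S^{d-1}$ with a union bound, exploiting that the per-direction exponent carries a factor of $d$ that cancels the exponential covering number (the paper uses $4^d$ and you use $5^d$, an immaterial difference), followed by the same two-regime case split in $t$. The constant bookkeeping you defer is routine and is carried out in the paper via the substitution $t^2 = 8\sigma^2\ln 4 + s$, yielding $\nSG(2\sqrt{2}\,\sigma)$.
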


\begin{proof}
The fact that the first two random vectors are $\nSG(c \cdot\sigma)$ immediately follows from the arguements in scalar version counterparts. For the third random vector, WLOG, assume $\E \X =0$. Let $\{\v_i\}$ be a $1/2$-cover of unit sphere $\S^{d-1}$ (thus $\norm{\v_i} = 1$). By property of subGaussian random vector, we know for each fixed $v_i$:
\begin{equation*}
\Pr(\la\v_i, \X \ra \ge t) \le e^{-\frac{d t^2}{2\sigma^2}}
\end{equation*}
Then let $\v(\X) = \X/\norm{\X }$, since $\{\v_i\}$ is a $1/2$-cover, there always exists a $j(\X)$ so that $\v_{j(\X)}$ in cover and $\norm{\v(\X) - \v_{j(\X)}} \le 1/2$. Therefore, we have:
\begin{align*}
\norm{\X} =& \la \v(\X), \X\ra = \la \v_{j(\X)}, \X\ra + \la \v(\X) - \v_{j(\X)}, \X \ra \\
\le & \la \v_{j(\X)}, \X \ra + \norm{\X}/2
\end{align*}
Rearranging gives $\norm{\X} \le 2\la \v_{j(\X)}, \X \ra$. Finally, the covering number of $1/2$-cover over $\S^{d-1}$ can be upper bounded by $4^d$. Therefore, by union bound:
\begin{align*}
\Pr(\norm{\X } \ge t) \le \Pr(\la \v_{j(\X)}, \X \ra \ge t/2)
\le \Pr(\exists i, \la \v_{i}, \X \ra \ge t/2)
\le 4^d e^{-\frac{d t^2}{8\sigma^2}}
\end{align*}
Now we are ready to check the second claim of Lemma \ref{lem:examplesGnorm}, when 
$t^2 \le 8\sigma^2 \ln 4$, we have,
\begin{equation*}
\Pr(\norm{\X} \ge t)  \le 1
\le 2 e^{-\frac{t^2}{16\sigma^2}}
\end{equation*} 
when $t^2 > 8\sigma^2 \ln 4$, we let $t^2 = 8\sigma^2 \ln 4 + s$ where $s>0$, then:
\begin{equation*}
\Pr(\norm{\X } \ge t) \le 4^d e^{-\frac{dt^2}{8\sigma^2}}
= e^{-\frac{ds}{8\sigma^2}}
\le e^{-\frac{s}{16\sigma^2}}
= 2 e^{-\frac{t^2}{16\sigma^2}}
\end{equation*} 
In sum, this proves that $\X$ is $\nSG(2\sqrt{2}\cdot\sigma)$.
\end{proof}
The following lemma gives equivalent characterizations of norm subGaussian in terms of moments and moment generating function (MGF).
\begin{lemma}[Properties of norm-subGaussian] \label{lem:nSGproperty} For random vector $\X \in \R^d$,
following statements are equivalent up to absolute constant difference in $\sigma$.
\begin{enumerate}
\item Tails: $\Pr(\norm{\X} \ge t) \le 2 e^{-\frac{t^2}{2\sigma^2}}$.
\item Moments: $(\E \norm{\X}^p)^{\frac{1}{p}} \le \sigma \sqrt{p}$ for any $p\in\N$.
\item Super-exponential moment: $\E e^{\frac{\norm{\X}^2}{\sigma^2}} \le e$.
\end{enumerate}
\end{lemma}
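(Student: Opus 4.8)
The plan is to treat $Y \defeq \norm{\X}$ as a single nonnegative scalar random variable and recognize the three statements as the familiar equivalence among the tail, moment, and exponential-moment characterizations of scalar subGaussianity (here one-sided, since $Y \ge 0$). I would establish the cyclic chain $(1) \Rightarrow (2) \Rightarrow (3) \Rightarrow (1)$, allowing $\sigma$ to change by an absolute constant factor at each arrow; this is precisely what ``equivalent up to absolute constant difference in $\sigma$'' permits, so I never need sharp constants, only control of growth rates.

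For $(1) \Rightarrow (2)$ I would use the layer-cake identity $\E Y^p = \int_0^\infty p\, t^{p-1} \Pr(Y \ge t)\,\dd t$, insert the tail bound from (1), and substitute $u = t^2/(2\sigma^2)$ so that the integral collapses to a Gamma function: up to explicit powers of $2$ and $\sigma$, one obtains $\E \norm{\X}^p \le C\, \sigma^p\, 2^{p/2}\, \Gamma(p/2 + 1)$. Taking $p$-th roots and applying Stirling's bound $\Gamma(p/2+1)^{1/p} = \order{\sqrt{p}}$ yields $(\E\norm{\X}^p)^{1/p} \le c\,\sigma\sqrt{p}$ for an absolute constant $c$, which is (2) with $\sigma$ rescaled.

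For $(2) \Rightarrow (3)$, suppose (2) holds with parameter $\sigma'$. I would expand the exponential as a power series and interchange sum and expectation (justified by nonnegativity of the terms), giving $\E e^{\norm{\X}^2/\sigma^2} = \sum_{k\ge 0} \E\norm{\X}^{2k}/(k!\,\sigma^{2k})$. Using (2) at $p = 2k$ to bound $\E\norm{\X}^{2k} \le (\sigma'\sqrt{2k})^{2k} = \sigma'^{2k}(2k)^k$ and the factorial estimate $k! \ge (k/e)^k$, each term is at most $(2e\,\sigma'^2/\sigma^2)^k$. Choosing $\sigma$ a sufficiently large multiple of $\sigma'$ (so that $2e\,\sigma'^2/\sigma^2 \le 1 - e^{-1}$) makes this a geometric series summing to at most $e$, which is (3). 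Finally, for $(3) \Rightarrow (1)$ I would apply the Chernoff/Markov step
\begin{equation*}
\Pr(\norm{\X} \ge t) = \Pr\!\left(e^{\norm{\X}^2/\sigma^2} \ge e^{t^2/\sigma^2}\right) \le e^{-t^2/\sigma^2}\,\E e^{\norm{\X}^2/\sigma^2} \le e^{\,1 - t^2/\sigma^2},
\end{equation*}
and then split on $t$: for large $t$ (concretely $t^2 \ge 2\sigma^2(1-\ln 2)$) this Chernoff bound is already dominated by $2e^{-t^2/(2\sigma^2)}$, while for small $t$ the target $2e^{-t^2/(2\sigma^2)}$ exceeds $1$ and the bound holds trivially since $\Pr \le 1$. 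A short check confirms the two regimes overlap, so (1) holds with the same $\sigma$.

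I expect the only real friction to be the constant bookkeeping across the cycle, most notably in $(2)\Rightarrow(3)$, where the series converges to a finite value bounded by $e$ only after rescaling $\sigma$ upward by a fixed factor; getting the Stirling/factorial estimates to line up so that each implication stays within an absolute-constant change of $\sigma$ is the step that most warrants care, though none of it is genuinely difficult.
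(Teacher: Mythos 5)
Your proposal is correct and takes essentially the same approach as the paper: the paper also reduces the claim to the scalar case by treating $\norm{\X}$ as a one-dimensional random variable, then simply cites the standard equivalence of subGaussian characterizations (Lemma 5.5 of Vershynin), whose proof is exactly the cyclic chain $(1) \Rightarrow (2) \Rightarrow (3) \Rightarrow (1)$ you carry out. Your write-up just supplies the details the paper delegates to that citation, and the constant bookkeeping you outline (layer-cake plus Stirling, the geometric-series bound with rescaled $\sigma$, and the two-regime Chernoff argument) all checks out.
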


\begin{proof}
Note $\norm{\X}$ is a 1-dimensional random variable. This lemma directly follows from the equivalent properties of $1$-dimensional subGaussian, for instance, Lemma 5.5 in \citep{vershynin2010introduction}.
\end{proof}
The following lemma says that if a random vector is $\nSG(\sigma)$, then its norm squared is subexponential and its projection on any direction a is subGaussian random variable.
\begin{lemma}
There is an absolute constant $c$ so that if random vector $\X \in R^d$ is zero-mean $\nSG(\sigma)$, then $\norm{\X}^2$ is $c\cdot\sigma^2$-subExponential, and for any fixed unit vector $\v \in \S^{d-1}$, $\la \v, \X\ra$ is $c\cdot\sigma$-subGaussian.
\end{lemma}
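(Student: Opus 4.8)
The plan is to reduce both assertions to the standard one-dimensional equivalences between tail, moment, and MGF characterizations, exploiting that $\norm{\X}$ is itself a scalar $\nSG(\sigma)$ variable and that Lemma~\ref{lem:nSGproperty} has already recorded these scalar characterizations in exactly the forms I will need. I would treat the two claims separately.

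For the claim that $\norm{\X}^2$ is $\subE$, the cleanest route is through moments. By Lemma~\ref{lem:nSGproperty}(2) we have $(\E\norm{\X}^q)^{1/q} \le \sigma\sqrt q$ for every $q \in \N$; taking $q = 2p$ and squaring gives $(\E(\norm{\X}^2)^p)^{1/p} = [(\E\norm{\X}^{2p})^{1/(2p)}]^2 \le 2\sigma^2 p$, which is exactly the moment growth that characterizes a sub-exponential random variable with parameter $O(\sigma^2)$. Equivalently, one may argue directly through tails: since $\norm{\X}^2 \ge s \iff \norm{\X} \ge \sqrt s$ for $s \ge 0$, the definition of $\nSG(\sigma)$ gives $\Pr(\norm{\X}^2 \ge s) \le 2e^{-s/(2\sigma^2)}$, a sub-exponential tail. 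Appealing once more to the scalar tail/moment/MGF equivalence then yields the desired $c\sigma^2$-$\subE$ bound.

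For the projection claim, the key inequality is Cauchy--Schwarz: for any fixed unit vector $\v \in \S^{d-1}$, $\abs{\la\v,\X\ra} \le \norm{\v}\,\norm{\X} = \norm{\X}$, so each moment of $\la\v,\X\ra$ is dominated by that of $\norm{\X}$ and Lemma~\ref{lem:nSGproperty}(2) gives $(\E\abs{\la\v,\X\ra}^p)^{1/p} \le \sigma\sqrt p$. Since $\X$ is zero-mean, $\E\la\v,\X\ra = \la\v,\E\X\ra = 0$, so $\la\v,\X\ra$ is a centered scalar whose moments grow like $\sigma\sqrt p$; the scalar equivalence (Lemma 5.5 of \citep{vershynin2010introduction}) then produces a sub-Gaussian MGF bound $\E e^{\theta\la\v,\X\ra} \le e^{\theta^2(c\sigma)^2/2}$, i.e. $\la\v,\X\ra$ is $c\sigma$-$\subG$.

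Because each step is a direct invocation of a scalar equivalence or an elementary inequality, I do not anticipate a substantive obstacle. The only points that require care are the centering conventions --- the sub-Gaussian MGF definition is written for $X - \E X$, so I must use zero-meanness for the projection, and for $\norm{\X}^2$ I should note that centering preserves sub-exponentiality --- and the bookkeeping of absolute constants, so that a single universal $c$ can be chosen to make both conclusions hold simultaneously.
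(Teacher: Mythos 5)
Your proof is correct, and it follows exactly the route the paper intends: the paper states this lemma without any proof at all, and its proof of the adjacent Lemma~\ref{lem:nSGproperty} is likewise a direct reduction to the scalar tail/moment/MGF equivalences of Lemma 5.5 in \citep{vershynin2010introduction}, which is precisely your strategy (tail substitution $s = t^2$ or moment growth $(\E(\norm{\X}^2)^p)^{1/p} \le 2\sigma^2 p$ for the sub-exponential claim, and the domination $\abs{\la \v, \X \ra} \le \norm{\X}$ plus zero-meanness for the sub-Gaussian claim). You also correctly flag the only two points needing care --- that the scalar MGF characterization requires centering, supplied by $\E \la \v, \X \ra = \la \v, \E\X \ra = 0$, and that centering $\norm{\X}^2$ changes the sub-exponential parameter by at most a constant factor --- so your write-up is a complete and faithful filling-in of the omitted argument.
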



The undesirable thing about the MGF characterization in Lemma~\ref{lem:nSGproperty} is that even if $\X$ is a zero mean random vector, $\norm{\X}$ is not zero mean, so it is difficult to directly work with MGF of $\norm{\X}$. Instead, we first convert the random vector $\X$ to a matrix $\Y$ and characterize the MGF of $\Y$.
\begin{lemma}[MGF Characterization]\label{lem:nSG_MGF}
There is an absolute constant $c$, if random vector $\X \in R^d$ is zero-mean $\nSG(\sigma)$, then let \begin{equation*}
\Y \defeq 
\begin{pmatrix}
0 & \X\trans \\
\X & \bm{0}
\end{pmatrix} \in \R^{(d+1) \times (d+1)}
\end{equation*}
we have $\E e^{\theta \Y} \preceq  e^{c\cdot \theta^2 \sigma^2}\I$ for any $\theta \in \R$.
\end{lemma}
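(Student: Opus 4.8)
The plan is to exploit the simple spectral structure of $\Y$ and then reduce the matrix MGF bound to a scalar MGF bound for $\norm{\X}$, using the zero-mean hypothesis to cancel the contribution that is linear in $\theta$. First I would record that $\Y$ is symmetric with eigenvalues $+\norm{\X}$ and $-\norm{\X}$ (each simple) together with $0$ of multiplicity $d-1$; this is seen by solving $\Y (a,\v\trans)\trans = \lambda (a,\v\trans)\trans$, which gives $\X\trans\v=\lambda a$ and $a\X = \lambda\v$ and hence forces $\lambda^2 = \norm{\X}^2$ off the kernel. Since both $e^{\theta\Y}$ and the matrix $\cosh(\theta\norm{\X})\I + \tfrac{\sinh(\theta\norm{\X})}{\norm{\X}}\Y$ are (for fixed $\X$) linear functions of $\Y$, they are simultaneously diagonalizable, so I can compare them eigenvalue by eigenvalue: on the $\pm\norm{\X}$ eigenvectors the two sides agree because $\cosh\pm\sinh=e^{\pm}$, while on the kernel the right-hand side equals $\cosh(\theta\norm{\X})\ge 1$. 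This yields the deterministic (in $\X$) inequality $e^{\theta\Y}\preceq\cosh(\theta\norm{\X})\I + \tfrac{\sinh(\theta\norm{\X})}{\norm{\X}}\Y$, where the ratio is interpreted as its limit $\theta$ when $\norm{\X}=0$.

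Next I would take expectations. The off-diagonal term is the crux: $\E\!\left[\tfrac{\sinh(\theta\norm{\X})}{\norm{\X}}\Y\right]$ is block-off-diagonal, determined by the vector $\bm w \defeq \E\!\left[\tfrac{\sinh(\theta\norm{\X})}{\norm{\X}}\X\right]$, so its eigenvalues are $\pm\norm{\bm w}$ and it satisfies $\preceq\norm{\bm w}\I$. Here I would use $\E\X=\zero$ to subtract the linear part, writing $\bm w = \E\!\left[\big(\tfrac{\sinh(\theta\norm{\X})}{\norm{\X}}-\theta\big)\X\right]$, so that $\norm{\bm w}\le \E\,|\sinh(\theta\norm{\X})-\theta\norm{\X}| = \E[\sinh(|\theta|\norm{\X})-|\theta|\norm{\X}]$ (the last identity holding for either sign of $\theta$ since $\sinh u-u$ is odd). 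Combining this with $\E\cosh(\theta\norm{\X})$ and using $\cosh+\sinh=e^{|\theta|\norm{\X}}$ gives $\E e^{\theta\Y}\preceq\big(\E e^{|\theta|\norm{\X}}-|\theta|\,\E\norm{\X}\big)\I$. The subtraction is precisely what removes the term linear in $\theta$, and it is essential: without it the scalar bound below would fail at small $\theta$, because $\norm{\X}$ is not centered and $\E e^{|\theta|\norm{\X}}$ has a nonvanishing $O(\theta)$ term.

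Finally I would verify the scalar inequality $\E e^{|\theta|\norm{\X}}-|\theta|\,\E\norm{\X}\le e^{c\theta^2\sigma^2}$. Setting $a=|\theta|$ and $Z=\norm{\X}$, integration by parts gives $\E e^{aZ}-a\,\E Z = 1 + \int_0^\infty a(e^{at}-1)\Pr(Z>t)\,\dd t$; plugging in the tail $\Pr(Z>t)\le 2e^{-t^2/2\sigma^2}$ from Definition~\ref{def:sGnorm} (equivalently Lemma~\ref{lem:nSGproperty}), bounding $e^{at}-1\le at\,e^{at}$, and completing the square reduces the integral to a Gaussian computation, producing a bound of the form $1+2x^2e^{x^2/2}+O(x^3e^{x^2/2})$ with $x=|\theta|\sigma$. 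Since this expression behaves like $1+2x^2$ as $x\to 0$ and like $x^3 e^{x^2/2}$ as $x\to\infty$, its logarithm divided by $x^2$ is continuous with finite limits at both ends, hence bounded by an absolute constant $c$; this gives $\le e^{cx^2}=e^{c\theta^2\sigma^2}$ and completes the proof.

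The main obstacle is the off-diagonal term, which is linear in $\X$ and does \emph{not} vanish in expectation for a general zero-mean $\nSG$ vector: one must bound its operator norm while crucially invoking $\E\X=\zero$ to kill the $O(\theta)$ contribution (being careful about the sign of $\theta$). After that, the only remaining delicate point is the routine one-variable inequality in the last step, which is where the cancellation of the linear term makes the target $e^{c\theta^2\sigma^2}$ attainable.
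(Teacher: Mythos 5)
Your proof is correct, but it takes a genuinely different route from the paper. The paper expands $\E e^{\theta \Y}$ as a power series, asserts $\E \Y^{2p+1} = \bm{0}$, and bounds the even terms via the moment characterization $\E\norm{\X}^{2p} \le (c\,\sigma^2 p)^p$ from Lemma~\ref{lem:nSGproperty}, resumming with $\frac{p^p}{(2p)!} \le \frac{1}{p!}$ to get $e^{c\theta^2\sigma^2}\I$. Your argument instead uses the functional-calculus bound $e^{\theta\Y} \preceq \cosh(\theta\norm{\X})\I + \frac{\sinh(\theta\norm{\X})}{\norm{\X}}\Y$, reduces the off-diagonal part to the vector $\w = \E\bigl[\bigl(\frac{\sinh(\theta\norm{\X})}{\norm{\X}} - \theta\bigr)\X\bigr]$ using $\E\X = \zero$, and finishes with a scalar tail-integration estimate. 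Notably, your insistence that the off-diagonal (odd) contribution does \emph{not} vanish in expectation is well taken: the paper's claim $\E\Y^{2p+1} = \bm{0}$ holds only for $p=0$, since $\Y^{2p+1} = \norm{\X}^{2p}\Y$ and $\E[\norm{\X}^{2p}\X]$ need not be zero for a general zero-mean $\X$ (already in $d=1$, a centered asymmetric variable has $\E[X^3] \ne 0$), so the paper's proof as written has a gap that your centering argument genuinely repairs. What each approach buys: the paper's series argument is shorter and gives clean constants directly from moment bounds (and is fully rigorous for symmetrically distributed $\X$), while your dilation-plus-centering argument is more robust, handling the general zero-mean case and both signs of $\theta$ correctly. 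One cosmetic slip: $e^{\theta\Y}$ is not a \emph{linear} function of $\Y$; what you need (and what is true) is that both sides are functions of $\Y$, hence simultaneously diagonalizable with it, so the eigenvalue-by-eigenvalue comparison over the spectrum $\{0, \pm\norm{\X}\}$ goes through as you state.
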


\begin{proof}
Note $\Y$ is a rank-2 matrix whose eigenvalues are $\norm{\X}, -\norm{\X}$, and $\E\Y^{2p+1} = \bm{0}$ for any $p\in \N$.
On the other hand, we also have $\norm{\Y^{2p}} \le \norm{\X}^2p$ for any $p\in\N$. Therefore, by Lemma \ref{lem:nSGproperty}, there exists constant $c$, for any $\theta \in \R$:
\begin{equation*}
\E e^{\theta \Y} = \I + \sum_{p=1}^\infty \frac{\theta^{2p} \E\Y_i^{2p}}{(2p)!}
\preceq \left(1 + \sum_{p=1}^\infty \frac{\theta^{2p} \E\norm{\X}^{2p}}{(2p)!}\right)\I 
\preceq  \left(1 + \sum_{p=1}^\infty \frac{(c\cdot \theta^2 \sigma^2 p)^p }{(2p)!}\right)\I
\preceq e^{c\cdot \theta^2 \sigma^2}\I
\end{equation*}
where in the last inequality we used the fact that $\frac{p^p}{(2p)!} \le \frac{1}{p!}$, this finishes the proof.
\end{proof}

\section{Vector Martingales with SubGaussian Norm}\label{sec:results}
In this section, we will prove our main result (Lemma~\ref{lem:concen_sum_rand}, Corollaries~\ref{cor:concen_sum} and~\ref{cor:concen_sum_randB}) giving concentration bounds for norm subGaussian random vectors. The main tool we use is Lieb's concavity theorem.
\begin{theorem}[\cite{tropp2012user}] \label{thm:Lieb}
Let $\A$ be a fixed symmetric matrix, and let $\Y$ be a random symmetric matrix. Then,
\begin{equation*}
\E \tr(\exp(\A + \Y)) \le \tr \exp(\A + \log (\E e^{\Y}))
\end{equation*}
\end{theorem}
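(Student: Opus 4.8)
The plan is to obtain this statement as a direct consequence of the substantive concavity result of Lieb, combined with Jensen's inequality. The form of Lieb's concavity theorem I would invoke is the assertion that, for any fixed symmetric matrix $\A$, the scalar-valued matrix function
\begin{equation*}
\mathbf{M} \mapsto \tr \exp\big(\A + \log \mathbf{M}\big)
\end{equation*}
is concave on the cone of positive-definite matrices. I expect this to be the genuinely hard ingredient: its proof proceeds through operator concavity of the logarithm together with a complex-interpolation or joint-convexity argument, which is exactly why the result is quoted from \cite{tropp2012user} rather than reproved here. Once this concavity is taken as given, the remainder of the derivation is short.

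First I would set $\mathbf{M} = e^{\Y}$. Since $\Y$ is symmetric, $e^{\Y}$ is almost surely symmetric positive definite, so the random matrix $\mathbf{M}$ lies in the domain of the concave function above. I would then apply Jensen's inequality for concave functions to $\mathbf{M}$, obtaining
\begin{equation*}
\E \, \tr \exp\big(\A + \log \mathbf{M}\big) \le \tr \exp\big(\A + \log (\E \, \mathbf{M})\big).
\end{equation*}
Finally, I would substitute $\mathbf{M} = e^{\Y}$ and use the matrix identity $\log e^{\Y} = \Y$ on the left-hand side, which holds because $\Y$ is real symmetric and applying $\exp$ followed by the principal $\log$ recovers each eigenvalue. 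This yields
\begin{equation*}
\E \, \tr \exp\big(\A + \Y\big) \le \tr \exp\big(\A + \log (\E \, e^{\Y})\big),
\end{equation*}
which is precisely the claimed bound.

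The only points requiring care are verifying that Jensen's inequality is legitimately applied — the function is real-valued and concave, and $\E\,\mathbf{M}$ remains inside the positive-definite cone, so the right-hand side is well defined — and confirming the logarithm identity $\log e^{\Y} = \Y$ on this domain. As emphasized above, the entire analytic difficulty is confined to Lieb's concavity theorem itself; granting that, the inequality follows from a single application of Jensen's inequality.
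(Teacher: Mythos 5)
Your derivation is correct and is exactly the route taken in the cited source: the paper itself quotes this result from \cite{tropp2012user} without proof, and there it is obtained precisely as you describe, by applying Jensen's inequality to Lieb's concavity theorem (concavity of $\mathbf{M} \mapsto \tr\exp(\A + \log \mathbf{M})$ on the positive-definite cone) with $\mathbf{M} = e^{\Y}$. The only implicit hypothesis worth noting is that $\E\, e^{\Y}$ should be finite so that the right-hand side is well defined, which is assumed wherever the theorem is applied.
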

We will prove our concentration result for norm subGaussian random vectors in a general setting where the subGaussian parameter $\sigma_i$ for the $i^{\textrm{th}}$ vector can itself be a random variable.
\begin{condition} \label{cond:subGmartingale}
Let random vectors $\X_1, \ldots, \X_n \in \R^d$, and corresponding filtrations $\F_i = \sigma(\X_1, \ldots, \X_i)$ for $i\in [n]$ satisfy that $\X_i |\F_{i-1}$ is zero-mean $\nSG(\sigma_i)$ with $\sigma_i \in \F_{i-1}$. i.e.,
\begin{equation*}
\E [\X_i |\F_{i-1}]  = 0, \quad \Pr\left(\norm{\X_i} \ge t | \F_{i-1}\right) \le 2 e^{-\frac{t^2}{2\sigma_i^2}}, \qquad \forall t \in \R, \forall i \in [n].
\end{equation*}
\end{condition}

\begin{lemma} \label{lem:concen_sum_rand}
There exists an absolute constant $c$ such that if $\X_1, \ldots, \X_n \in \R^d$ satisfy condition \ref{cond:subGmartingale}, then for any fixed $\delta >0$, $\theta>0$, with probability at least $1-\delta$:
\begin{equation*}
\norm{\sum_{i=1}^n \X_i}  \le  c\cdot \theta \sum_{i=1}^n \sigma_i^2 + \frac{1}{\theta} \log \frac{2d}{\delta}
\end{equation*}
\end{lemma}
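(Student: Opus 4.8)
The plan is to run a matrix-Freedman-style argument on the symmetric dilations $\Y_i = \begin{pmatrix} 0 & \X_i\trans \\ \X_i & \bm{0}\end{pmatrix}$ introduced in Lemma~\ref{lem:nSG_MGF}. The first observation is that $\sum_{i=1}^n \Y_i$ is exactly the dilation of $\sum_{i=1}^n \X_i$, so its largest eigenvalue equals $\norm{\sum_{i=1}^n \X_i}$; hence it suffices to control $\lambda_{\max}(\sum_{i=1}^n \Y_i)$ by the standard Laplace-transform (Chernoff) method. For $\theta>0$ I would use $\exp(\theta\lambda_{\max}(\sum_{i=1}^n\Y_i)) = \lambda_{\max}(\exp(\theta\sum_{i=1}^n\Y_i)) \le \tr\exp(\theta\sum_{i=1}^n\Y_i)$, which reduces everything to bounding a trace exponential.

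The central step is to build a supermartingale out of this trace exponential. Define $M_k \defeq \tr\exp(\theta\sum_{i=1}^k\Y_i - c\theta^2(\sum_{i=1}^k\sigma_i^2)\I)$, where $c$ is the constant from Lemma~\ref{lem:nSG_MGF}. I would establish $\E[M_k\mid\F_{k-1}] \le M_{k-1}$ as follows. Since $\sigma_k\in\F_{k-1}$, the scalar prefactor $\exp(-c\theta^2\sum_{i=1}^k\sigma_i^2)$ is $\F_{k-1}$-measurable and can be pulled outside the conditional expectation. Applying Lieb's concavity theorem (Theorem~\ref{thm:Lieb}) conditionally, with fixed matrix $\A = \theta\sum_{i=1}^{k-1}\Y_i$ and random part $\theta\Y_k$, gives $\E[\tr\exp(\A+\theta\Y_k)\mid\F_{k-1}] \le \tr\exp(\A + \log\E[e^{\theta\Y_k}\mid\F_{k-1}])$. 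Lemma~\ref{lem:nSG_MGF} supplies $\E[e^{\theta\Y_k}\mid\F_{k-1}]\preceq e^{c\theta^2\sigma_k^2}\I$, so by operator monotonicity of the logarithm $\log\E[e^{\theta\Y_k}\mid\F_{k-1}]\preceq c\theta^2\sigma_k^2\I$; then monotonicity of the trace exponential together with the fact that $c\theta^2\sigma_k^2\I$ commutes with $\A$ yields $\E[\tr\exp(\A+\theta\Y_k)\mid\F_{k-1}]\le e^{c\theta^2\sigma_k^2}\tr\exp(\A)$. Multiplying back by the prefactor cancels the $e^{c\theta^2\sigma_k^2}$ factor and produces exactly $M_{k-1}$.

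The conclusion is then routine. Iterating the supermartingale bound gives $\E M_n\le M_0 = \tr\I_{d+1} = d+1 \le 2d$. On the other hand, keeping only the top eigenvalue in the trace and using the dilation identity gives the pointwise lower bound $M_n \ge \exp(\theta\norm{\sum_{i=1}^n\X_i} - c\theta^2\sum_{i=1}^n\sigma_i^2)$. Combining the two through Markov's inequality, the event $\{\norm{\sum_{i=1}^n\X_i} > c\theta\sum_{i=1}^n\sigma_i^2 + \frac1\theta\log\frac{2d}{\delta}\}$ forces $M_n > 2d/\delta$, an event of probability at most $\E M_n/(2d/\delta) \le \delta$.

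I expect the main obstacle to be the supermartingale construction, specifically handling the fact that the parameters $\sigma_i$ are random rather than fixed. A naive attempt to bound $\E\tr\exp(\theta\sum_{i=1}^n\Y_i)$ directly leaves the random quantity $\exp(c\theta^2\sum_{i=1}^n\sigma_i^2)$ trapped inside an outer expectation and fails to produce the data-dependent bound. The resolution, and the reason the predictability assumption $\sigma_i\in\F_{i-1}$ in Condition~\ref{cond:subGmartingale} is essential, is to absorb $c\theta^2\sum_{i=1}^k\sigma_i^2$ into the exponent so that it passes through each conditional expectation as a measurable constant; this is precisely what makes the telescoping cancellation go through.
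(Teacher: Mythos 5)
Your proposal is correct and follows essentially the same route as the paper: the same dilation $\Y_i$, the same conditional application of Lieb's theorem (Theorem~\ref{thm:Lieb}) together with Lemma~\ref{lem:nSG_MGF}, and the same key trick of absorbing the random quantity $c\theta^2\sum_{i}\sigma_i^2$ into the exponent so it passes through the conditional expectations --- your supermartingale $M_k$ is exactly the paper's telescoping chain of conditional expectations, written more explicitly. If anything, your bookkeeping is slightly cleaner: you track the dimension as $\tr\,\I_{d+1} = d+1 \le 2d$ and apply Markov directly to $M_n$, avoiding the paper's loosely justified factor-of-two symmetry step.
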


\begin{proof}
According to Lemma \ref{lem:nSG_MGF}, there exists an absolute constant $c$ so that $
\E[e^{\theta \Y_i}|\F_{i-1}] \preceq  e^{c\cdot\theta^2 \sigma_i^2}\I$ holds for any $i\in [n]$.
Therefore, we have:
\begin{align*}
&\E \tr \exp(-c\cdot \theta^2 \sum_{i=1}^n \sigma_i^2 \I + \theta \sum_{i=1}^n \Y_i)
= \E\{ \E[\tr \exp(-c\cdot \theta^2 \sum_{i=1}^n \sigma_i^2 \I  + \theta \sum_{i=1}^n \Y_i)|\F_{n-1}] \} \\
\overset{(1)}\le& \E \tr \exp(-c\cdot \theta^2 \sum_{i=1}^n \sigma_i^2 \I  + \theta \sum_{i=1}^{n-1} \Y_i + \log \E[e^{\theta \Y_n} |\F_{n-1}]) 
\overset{(2)}{\le} \E \tr \exp(-c\cdot \theta^2 \sum_{i=1}^{n-1} \sigma_i^2 \I  + \theta \sum_{i=1}^{n-1} \Y_i) \\
\le& \ldots \le \tr \exp(0 \I) = d 
\end{align*}
where step (1) is due to Theorem~\ref{thm:Lieb}, and step (2) used the fact that if matrix $\A \preceq \B$, then $e^{\mat{C} + \A} \preceq e^{\mat{C} + \B}$. On the other hand, since identity matrix commutes with any matrix, we know:
$$\exp(-c\cdot \theta^2 \sum_{i=1}^n \sigma_i^2 \I + \theta \sum_{i=1}^n \Y_i) = \exp(-c\cdot\theta^2 \sum_{i=1}^n \sigma_i^2) \cdot \exp(\theta \sum_{i=1}^n \Y_i) $$
Therefore, for any $t \ge 0$, $\theta \ge 0$, by Markov's inequality, we have:
\begin{align*}
&\Pr\left[\norm{\sum_{i=1}^n \X_i} \ge c\cdot\theta \sum_{i=1}^n \sigma_i^2 + t/\theta\right]
\overset{(1)}{=} \Pr\left[\norm{\sum_{i=1}^n \Y_i} \ge c\cdot\theta \sum_{i=1}^n \sigma_i^2 + t/\theta\right]\\
\overset{(2)}{=}& 2\Pr\left[\lambda_{\max}\left(\sum_{i=1}^n \Y_i\right) \ge c\cdot\theta \sum_{i=1}^n \sigma_i^2 + t/\theta\right] 
= 2\Pr\left[\lambda_{\max}\left(e^{\theta \sum_{i=1}^n \Y_i}\right) \ge e^{c\cdot\theta^2 \sum_{i=1}^n \sigma_i^2 + t}\right] \\
\le& 2\Pr\left[\tr\left(e^{\theta \sum_{i=1}^n \Y_i}\right) \ge e^{c\cdot\theta^2 \sum_{i=1}^n \sigma_i^2 + t}\right] 
\le 2 e^{-t} \E \tr\left(e^{-c\cdot \theta^2 \sum_{i=1}^n \sigma_i^2 \I  + \theta \sum_{i=1}^n \Y_i}\right)
\le 2d e^{-t}
\end{align*}
where step (1) is because $\sum_{i=1}^n \Y_i$ is a rank-2 matrix whose eigenvalues are $\norm{\sum_{i=1}^n \X_i}, -\norm{\sum_{i=1}^n \X_i}$;
step (2) is due to all preconditions are symmetric with respect to 0. Finally, setting RHS equal to $\delta$, we finish the proof.
\end{proof}

\begin{corollary} [Hoeffding type inequality for norm-subGaussian] \label{cor:concen_sum}
There exists an absolute constant $c$ such that if $\X_1, \ldots, \X_n \in \R^d$ satisfy condition \ref{cond:subGmartingale} with fixed $\{\sigma_i\}$, then for any $\delta >0$, with probability at least $1-\delta$:
\begin{equation*}
\norm{\sum_{i=1}^n \X_i}  \le c\cdot \sqrt{\sum_{i=1}^n \sigma_i^2 \log \frac{2d}{\delta}}
\end{equation*}
\end{corollary}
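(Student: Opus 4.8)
The plan is to derive this corollary directly from Lemma~\ref{lem:concen_sum_rand} by optimizing the free parameter $\theta$. The lemma already gives, for every fixed $\theta > 0$ and every fixed $\delta > 0$, the high-probability bound $\norm{\sum_{i=1}^n \X_i} \le c\theta \sum_{i=1}^n \sigma_i^2 + \frac{1}{\theta}\log\frac{2d}{\delta}$. Writing $S \defeq \sum_{i=1}^n \sigma_i^2$ and $L \defeq \log\frac{2d}{\delta}$, the right-hand side is a function $f(\theta) = c\theta S + L/\theta$ of the single scalar $\theta$, and the entire game is to choose $\theta$ so as to make $f(\theta)$ as small as possible.

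First I would minimize $f$ over $\theta > 0$. Setting $f'(\theta) = cS - L/\theta^2 = 0$ yields the minimizer $\theta^\star = \sqrt{L/(cS)}$. Substituting this value back gives $c\theta^\star S = \sqrt{cSL}$ and $L/\theta^\star = \sqrt{cSL}$, so that $f(\theta^\star) = 2\sqrt{cSL} = 2\sqrt{c}\cdot\sqrt{\sum_{i=1}^n \sigma_i^2 \,\log\frac{2d}{\delta}}$. Folding $2\sqrt{c}$ into a new absolute constant then produces exactly the claimed bound. Concretely, I would apply Lemma~\ref{lem:concen_sum_rand} with this particular value $\theta = \theta^\star$, which holds with probability at least $1-\delta$, and read off the resulting inequality.

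The one point requiring care — and the reason the corollary stipulates \emph{fixed} $\{\sigma_i\}$ — is that Lemma~\ref{lem:concen_sum_rand} is valid only for a $\theta$ chosen in advance, i.e.\ not depending on the realization of the random vectors. Because the $\sigma_i$ are deterministic here, the minimizer $\theta^\star = \sqrt{L/(cS)}$ is itself a fixed, non-random scalar, so plugging it into the lemma is legitimate. This is really the whole content of the argument: there is no genuine obstacle, only the bookkeeping observation that optimizing a parameter that does not see the data preserves the high-probability guarantee. (Were the $\sigma_i$ random, $\theta^\star$ would be $\F$-measurable randomness and the substitution would be invalid, which is precisely why the general random-$\sigma_i$ case of Lemma~\ref{lem:concen_sum_rand} cannot be sharpened in this way without additional work.)
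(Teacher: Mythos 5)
Your proof is correct and follows essentially the same route as the paper: both apply Lemma~\ref{lem:concen_sum_rand} with a deterministic choice of $\theta$ on the order of $\sqrt{\log\frac{2d}{\delta}\,/\sum_{i=1}^n \sigma_i^2}$ (the paper fixes this value directly, you derive it by minimizing $c\theta S + L/\theta$, which differs only by the absorbed constant $\sqrt{c}$). Your observation that the fixed $\{\sigma_i\}$ assumption is exactly what makes this $\theta$ non-random matches the paper's own justification.
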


\begin{proof}
Since now $\{\sigma_i\}$ are fixed which are not random, we can pick $\theta$ in Lemma \ref{lem:concen_sum_rand} as a function of $\{\sigma_i\}$. Indeed, pick 
$\theta = \sqrt{\frac{1}{\sum_{i=1}^n \sigma_i^2} \log \frac{2d}{\delta}}$ finishes the proof.
\end{proof}

\begin{corollary} \label{cor:concen_sum_randB}
There exists an absolute constant $c$ such that if $\X_1, \ldots, \X_n \in \R^d$ satisfy condition \ref{cond:subGmartingale}, then for any fixed $\delta >0$, and $B > b > 0$, with probability at least $1-\delta$:
\begin{equation*}
\text{either }\sum_{i=1}^n \sigma_i^2 \ge B \quad \text{or} \quad
\norm{\sum_{i=1}^n \X_i}  \le  c\cdot \sqrt{\max\{\sum_{i=1}^n \sigma_i^2, b\}\cdot (\log \frac{2d}{\delta} + \log\log \frac{B}{b})}
\end{equation*}
\end{corollary}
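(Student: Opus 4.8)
The plan is to reduce to Lemma~\ref{lem:concen_sum_rand} by a stratification (``peeling'') argument over the possible magnitude of the random quantity $S \defeq \sum_{i=1}^n \sigma_i^2$. The difficulty — and the reason Corollary~\ref{cor:concen_sum} does not apply directly — is that the choice $\theta \approx \sqrt{\log(2d/\delta)/S}$ that balances the two terms of Lemma~\ref{lem:concen_sum_rand} depends on $S$, which is now random; but Lemma~\ref{lem:concen_sum_rand} only holds for a \emph{fixed} (non-random) $\theta$. So I cannot simply plug in a data-dependent $\theta$. Instead I would guess $S$ up to a multiplicative factor of $2$ along a geometric grid of scales and take a union bound over the guesses.

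First I would set up the grid. Let $J = \lceil \log_2(B/b)\rceil$, and for $j = 0,1,\ldots,J-1$ define scales $s_j = b\cdot 2^j$, so that the dyadic intervals $[s_j, 2 s_j)$ cover $[b, B)$ (since $b\cdot 2^J \ge B$). For each $j$ I apply Lemma~\ref{lem:concen_sum_rand} with the \emph{fixed} parameter $\theta_j = \sqrt{\log(2dJ/\delta)/s_j}$ and confidence level $\delta/J$; this produces an event $G_j$ of probability at least $1 - \delta/J$ on which $\norm{\sum_{i=1}^n \X_i} \le c\,\theta_j S + \theta_j^{-1}\log(2dJ/\delta)$. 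A union bound over $j = 0,\ldots,J-1$ then guarantees that $\bigcap_j G_j$ holds with probability at least $1-\delta$.

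Next I would do the case analysis on the event $\bigcap_j G_j$. If $S \ge B$ the first alternative in the conclusion holds and there is nothing to prove. Otherwise $\max\{S,b\} \in [b,B)$, so there is a unique index $j^*$ with $s_{j^*} \le \max\{S,b\} < 2 s_{j^*}$. Evaluating the bound from $G_{j^*}$: because $S \le \max\{S,b\} < 2 s_{j^*}$, the definition of $\theta_{j^*}$ makes both terms $c\,\theta_{j^*} S$ and $\theta_{j^*}^{-1}\log(2dJ/\delta)$ at most an absolute constant times $\sqrt{s_{j^*}\log(2dJ/\delta)}$, and $s_{j^*} \le \max\{S,b\}$. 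This yields $\norm{\sum_{i=1}^n \X_i} \le c''\sqrt{\max\{S,b\}\,\log(2dJ/\delta)}$ for an absolute constant $c''$.

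Finally I would clean up the logarithmic factor. Since $\log(2dJ/\delta) = \log(2d/\delta) + \log J$ and $J = \lceil \log_2(B/b)\rceil$, we have $\log J \le c'(\log\log(B/b) + 1)$, and the additive constant is absorbed into $\log(2d/\delta) \ge \log 2$; this recovers the stated factor $\log\frac{2d}{\delta} + \log\log\frac{B}{b}$ after adjusting the absolute constant. I expect the main obstacle to be conceptual rather than computational: recognizing that the random balancing $\theta$ forces a union bound over scales, and checking that the roles of $b$ and $B$ are precisely to bound the number of grid points — and hence the union-bound penalty — by $\log_2(B/b)$. The lower floor $b$ prevents the grid from extending toward $0$ (where it would need infinitely many scales), and the upper cutoff $B$ caps it from above, so that the union-bound cost enters only as the doubly-logarithmic term $\log\log\frac{B}{b}$.
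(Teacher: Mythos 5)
Your proposal is correct and matches the paper's own proof essentially step for step: the same dyadic grid of scales $\psi_j = 2^{j-1}b$ between $b$ and $B$, the same application of Lemma~\ref{lem:concen_sum_rand} with a fixed $\theta_j = \sqrt{\iota/\psi_j}$ per scale, the same union bound over the $\le \log(B/b)$ scales producing the $\log\log\frac{B}{b}$ term, and the same case analysis on where $\sum_i \sigma_i^2$ falls. Your only (cosmetic) streamlining is unifying the paper's two cases via $\max\{\sum_i \sigma_i^2, b\}$ and the single index $j^*$.
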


\begin{proof} For simplicity, denote log factor $\iota \defeq \log \frac{2d}{\delta} + \log\log \frac{B}{b}$ By Lemma \ref{lem:concen_sum_rand}, we know for any fixed $\theta$, with probability $1-\delta \cdot \log^{-1}(B/b)$, we have:
\begin{equation*}
\norm{\sum_{i=1}^n \X_i}  \le  c\cdot \theta \sum_{i=1}^n \sigma_i^2 + \frac{\iota}{\theta}
\end{equation*}
Construct two sets of $\Psi  = \{\psi_1, \ldots, \psi_s\}$ and $\Theta = \{\theta_1, \ldots, \theta_s\}$, where
$\psi_j = 2^{j-1} \cdot b$ and $\theta_j = \sqrt{\frac{\iota}{\psi_j}}$
with last element $\psi_s \le B$, $2\psi_s > B$. It is easy to see $|\Psi| = |\Theta| \le \log (B/b)$. By union bound, we have with probability $1-\delta$: 
\begin{equation*}
\norm{\sum_{i=1}^n \X_i}  \le  \min_{j\in [s]} \left[c\cdot \theta_j \sum_{i=1}^n \sigma_i^2 + \frac{\iota}{\theta_j}\right]
\end{equation*}

Consider following two cases: (1) $\sum_{i=1}^n \sigma_i^2 \in [b, B]$. Then, there exists $j\in[s]$ such that $\psi_j \le \sum_{i=1}^n \sigma_i^2 < 2\psi_j$:
\begin{equation*}
\norm{\sum_{i=1}^n \X_i}  \le  c\cdot\theta_j \sum_{i=1}^n \sigma_i^2 + \frac{\iota}{\theta_j}
= c\cdot \sqrt{\frac{\iota}{\psi_j}} \sum_{i=1}^n \sigma_i^2 + \iota \cdot\sqrt{\frac{\psi_j}{\iota}}
\le (2c + 1)\sqrt{ \sum_{i=1}^n \sigma_i^2 \cdot \iota}
\end{equation*}
(2) $\sum_{i=1}^n \sigma_i^2 \in [0, b)$. In this case we know $\psi_1 =b$ and:
\begin{equation*}
\norm{\sum_{i=1}^n \X_i}  \le  c\cdot\theta_1 \sum_{i=1}^n \sigma_i^2 + \frac{\iota}{\theta_1}
= c\cdot\sqrt{\frac{\iota}{b}} \sum_{i=1}^n \sigma_i^2 + \iota \cdot\sqrt{\frac{b}{\iota}}
\le (2c + 1)\sqrt{b \cdot \iota}
\end{equation*}

Combining two cases we finish the proof.

\end{proof}

\section{Conclusion}\label{sec:conc}
In this short note, we introduced the notion of norm subGaussian random vectors, which include subGaussian random vectors and bounded random vectors as special cases. While it is true that $\textrm{subGaussian}\left(\frac{\sigma}{\sqrt{d}}\right) \subseteq \nSG(\sigma) \subseteq \textrm{subGaussian}(\sigma)$, applying concentration bounds for $\textrm{subGaussian}(\sigma)$ would yield bounds which have at least linear dependence on $d$. In contrast, the bounds we develop (in Lemma~\ref{lem:concen_sum_rand} and Corollaries~\ref{cor:concen_sum} and~\ref{cor:concen_sum_randB}) have only logarithmic dependence on $d$. It is not clear if this logarithmic dependence is tight -- totally eliminating this dependence is an interesting open problem.


\section*{Acknowledgements}
We thank Gabor Lugosi and Nilesh Tripuraneni for helpful discussions.

\bibliographystyle{plainnat}
\bibliography{saddle}

\end{document}